\documentclass{article}

\usepackage[paper=a4paper,total={18.5cm, 25.5cm}]{geometry}
\usepackage[affil-it]{authblk}
\usepackage{hyperref}

\usepackage{graphicx} % Required for inserting images

\usepackage{amsmath}
\usepackage{amssymb}
\usepackage{amsthm, color, bm}
\usepackage{mathtools}
\usepackage{amsfonts}
\usepackage{mathrsfs}
\usepackage{framed}

\newtheorem{lemma}{Lemma}
\newtheorem{theorem}{Theorem}
\newtheorem{remark}{Remark}

\newtheorem{problem}{Problem}
\usepackage{mathrsfs}

\newcommand{\R}{\mathbb{R}}

\newcommand{\W}{W}

\DeclareMathOperator{\Div}{div}

\DeclareMathOperator{\Grad}{grad}
\DeclareMathOperator{\Grads}{\Grad_S}

\DeclareMathOperator{\Kernel}{Ker}
\DeclareMathOperator{\Image}{Im}

\DeclareMathOperator{\st}{s.t.}
\DeclareMathOperator{\Td}{T_0}
\DeclareMathOperator{\Tn}{T_\nu}

\newcommand{\RN}{\mathbb{R}^N}

\newcommand{\MN}{\mathbb{M}^N}
\newcommand{\SN}{\mathbb{S}^N}
\newcommand{\AN}{\mathbb{A}^N}

\newcommand{\rbm}{R}

\newcommand{\LpRN}{L^p(\Omega;\RN)}
\newcommand{\LqRN}{L^q(\Omega;\RN)}
\newcommand{\LpSN}{L^p(\Omega;\SN)}
\newcommand{\LqSN}{L^q(\Omega;\SN)}
\newcommand{\LpMN}{L^p(\Omega;\MN)}

\newcommand{\LoneMN}{L^1(\Omega;\SN)}
\newcommand{\LinfMN}{L^\infty(\Omega;\MN)}

\newcommand{\DpSN}{W^{\Div,p}(\Omega;\SN)}
\newcommand{\DpzSN}{W^{\Div,p}_0(\Omega;\SN)}
\newcommand{\DqSN}{W^{\Div,q}(\Omega;\SN)}
\newcommand{\DqzSN}{W^{\Div,q}_0(\Omega;\SN)}
\newcommand{\WpRN}{W^{1,p}(\Omega;\RN)}
\newcommand{\WqRN}{W^{1,q}(\Omega;\RN)}
\newcommand{\WpzRN}{W^{1,p}_0(\Omega;\RN)}
\newcommand{\WqzRN}{W^{1,q}_0(\Omega;\RN)}

\newcommand{\WoneinfRN}{W^{1,\infty}(\Omega;\RN)}

\newcommand{\testRN}{\mathcal{D}(\Omega;\RN)}
\newcommand{\distRN}{\mathcal{D}'(\Omega;\RN)}

\NewDocumentCommand{\sprod}{O{} O{} m m}{
  \langle #3, #4 \rangle_{#1, #2}
}
\newcommand\norm[2][]{\|#2\|_{#1}}

\title{Data‑driven stress problem under purely \\normal homogeneous Neumann boundary conditions}

\author[1]{\small Cristian G. Gebhardt}
\author[2]{\small Kundan Kumar\thanks{Corresponding author.\textit{E-mail-address}:\href{mailto:kundan.kumar@uib.no}{kundan.kumar@uib.no}}}
\author[2]{\small Florin A. Radu}

\affil[1]{\small University of Bergen, Geophysical Institute and Bergen Offshore Wind Centre (BOW), All\'{e}gaten 70, 5007 Bergen, Norway}
\affil[2]{\small University of Bergen, Department of Mathematics and Center for Modeling of Coupled Subsurface Dynamics (CSD), All\'{e}gaten 41, 5007 Bergen, Norway}

\date{}

\begin{document}

\maketitle

\begin{abstract}
\noindent Data‑Driven Continuum Mechanics -- the continuous counterpart of Data-Driven Computational Mechanics -- is a modern paradigm that enhances classical continuum mechanics by incorporating finite sets of experimental material data directly, avoiding any form of constitutive modeling. Despite recent progress, its analytical foundations remain at an early stage. In this work, we establish a rigorous functional‑analytic framework for the data-driven stress problem under purely homogeneous normal Neumann boundary conditions. The problem is formulated as finding a stress field (satisfying the balance of linear and angular momenta and the boundary conditions) that is closest, in an $L^p$-sense, to an auxiliary stress field that is simultaneously sought and locally resembles a finite discrete set of experimental stress states. Our analysis relies on two key ingredients. First, the divergence operator induces a topological isomorphism between the space of symmetric stress fields modulo its kernel and the space of loads balanced by rigid-body motions, ensuring the existence of an equilibrated response. Second, the finiteness of the material data set guarantees proximinality in the stress space, which in turn yields a complete existence and uniquenes theory for solution equivalence classes. Together, these two properties provide a rigorous mathematical foundation for the data‑driven stress problem under purely homogeneous normal Neumann boundary conditions.
\vspace{5mm}

\noindent \textbf{MSC(2000):}
74B05;
74G22;
35Q74.
\vspace{5mm}

\noindent \textbf{Keywords:}
Data-Driven Continuum Mechanics;
stress problem;
normal homogeneous Neumann boundary conditions;
Sobolev spaces;
existence and uniqueness of solution equivalence classes;

\end{abstract}

\section{Introduction}

The classical approach in continuum mechanics relies on the availability of well‑defined phenomenological constitutive laws. However, for complex or non‑standard materials, such laws often introduce significant modeling uncertainties. To overcome this limitation, one can adopt a Data‑Driven Continuum Mechanics framework\footnote{Extending the term coined by Conti, Müller and Ortiz in \cite{ContiMuellerOrtiz2018,ContiMuellerOrtiz2020}.}, which replaces traditional constitutive equations with a direct search within a discrete set of experimental material data. In this setting, the goal is to identify states that satisfy the fundamental physical constraints -- balance of linear and angular momenta, geometrical compatibility, and prescribed boundary conditions -- while minimizing the distance to auxiliary fields that locally resemble the available experimental data \cite{ContiMuellerOrtiz2018,ContiMuellerOrtiz2020,GebhardtSteinbach2026}.\\

In this contribution, we develop a rigorous functional‑analytic foundation for, perhaps, the simplest specialization of the general Data‑Driven Continuous Mechanics framework, focusing exclusively on symmetric stress fields under purely normal homogeneous Neumann boundary conditions. Working within Sobolev and Lebesgue spaces, we establish the conditions required to ensure both the existence and uniqueness of the resulting equilibrated stress field, as well as the existence of an auxiliary stress field that locally resembles the available experimental data. To this end, we begin by introducing the essential analytical ingredients needed to formulate the problem under consideration. We then proceed to define the problem precisely and present the corresponding existence theorem, thereby establishing the mathematical framework in which the remainder of this contribution is developed.\\

Let $\Omega \subset\mathbb{R}^N$ ($N \geq 2$) be a bounded Lipschitz domain with boundary $\Gamma := \partial \Omega$. Let $p,q \in (1,\infty)$ be Hölder conjugate exponents, i.e., 
$1/p+1/q = 1.$
Let $\mathbb{M}^N$ be the space of real $N \times N$ matrices and consider its canonical direct-sum decomposition $\mathbb{M}^N = \mathbb{S}^N\oplus\mathbb{A}^N$, where $\mathbb{S}^N$ and $\mathbb{A}^N$ are mutually orthogonal subspaces of symmetric and anti-symmetric matrices, respectively. The orthogonality is taken with respect to the Frobenius inner product. Moreover, 
$\dim(\MN) = N^2$, $\dim(\SN) = N(N+1)/2$ and $\dim(\AN) = N(N-1)/2.$\\
Let
$$\DpSN:= \lbrace s\in\LpSN:\Div s\in\LpRN\rbrace$$
and
$$\DpzSN:= \lbrace s \in \DpSN: s\nu = 0 \in W^{-1/p,p}(\Gamma; \mathbb{R}^N)\rbrace,$$
where $\nu \in S^{N-1}$ is the outward unit normal on $\Gamma$.\\
Let 
$${\mathscr S} := \lbrace \tilde{s} \in \LpSN: \tilde{s}(\omega) = \mathfrak{s}_m \in \mathbb{S}^N~\text{for a.e.}~\omega \in \Omega,~\mathrm{for~some}~\mathfrak{s}_m \in \mathfrak{S}\rbrace$$
and $\mathfrak{S} = \lbrace \mathfrak{s}_m\rbrace_{m\in\mathcal{M}} \subset \mathbb{S}^N$ is a given finite set of symmetric stress states.

Introduce the space of infinitesimal rigid motions
$$
\rbm_q := \lbrace\ w:\Omega\to\mathbb{R}^N : w(\omega) = A\omega + b,\ A \in \mathbb{A}^N,\ b \in \mathbb{R}^{N}\rbrace\cap \LqRN,
$$
and its annihilator with respect to the $\LpRN-\LqRN$ dual pairing
$$
\rbm^\circ_q:= \lbrace f \in\LpRN:\sprod[\LpRN][\LqRN]{f}{w}=0~\forall~w\in \rbm_q \rbrace.
$$ 
Assuming a Helmholz-Weyl decomposition (to be elaborated later), let $\Pi$ denote the projector that maps any $v\in\LpSN$ to its divergence-free component such that $v = \Pi v+\Grads u$ for some $u\in\WpRN$, understood up to infinitesimal rigid body motions.\\

We now define the data-driven stress problem under purely homogeneous Neumann boundary conditions ($DDSPN_0$).

\begin{framed}
\begin{problem}[$DDSPN_0$]
  Given $f\in\rbm^\circ_p$,
  consider the variational problem
$$
\smash[b]{\inf_{(s,\tilde{s})\in\DpzSN\times\LpSN}}\quad\frac{1}{p}\|s-\tilde{s}\|^p_{\LpSN}~~\st~~\Div s+f = 0,\quad\Pi s = 0, \quad \tilde{s}\in{\mathscr S}.
$$
\label{Prob:DDSPN0}
\end{problem}
\end{framed}

The constraint $\Div s+f = 0$ enforces the balance of linear and angular momenta, while the constraint $\Pi s = 0$ suppresses the divergence-free component of the stress field, thereby selecting the canonical representative within the corresponding equivalence class. The auxiliary field $\tilde{s}$ is restricted to the set $\mathscr{S}$. The objective functional measures the distance between these  stress fields in the $\LpSN$-norm. The interplay between the equilibrium constraint and the discrete nature of the data requires establishing both existence of admissible stress fields and the well-posedness of the closest‑point projection onto $\mathscr{S}$.

\begin{framed}
\begin{theorem}[$DDSPN_0$]
    Let $f \in \rbm^\circ_p$. Then $DDSPN_0$ admits at least one solution
    $$(s_f, \tilde{s}) \in \DpzSN\times \mathscr{S}$$
    satisfying
    $$\Div s_f + f = 0,\quad \Pi s_f = 0.$$
    Moreover, the equilibrated stress field $s_f$ is uniquely determined, whereas the auxiliary field $\tilde{s}$ not need to be unique.
\label{Thm:DDSPN0}
\end{theorem}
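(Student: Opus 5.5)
The proof decouples the two unknowns. The constraints on $s$ do not involve $\tilde s$, and the constraint $\tilde s\in\mathscr S$ does not involve $s$. So the plan is: first show that the constraint set for $s$ is a single point $\{s_f\}$, and then minimize over $\tilde s$ alone.

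\textbf{Step 1: existence of an equilibrated stress.} Consider the divergence operator $\Div:\DpzSN\to\LpRN$. Take $s\in\DpzSN$ and $w\in\rbm_q$. Then $\Grads w=0$ and $s\nu=0$ on $\Gamma$, so integration by parts gives $\sprod[\LpRN][\LqRN]{\Div s}{w}=0$. Hence $\Image(\Div)\subset\rbm^\circ_p$. For the reverse inclusion, we show that the range is closed and that its annihilator is exactly the space of rigid motions.
\begin{itemize}
\item The adjoint of $\Div$ is, up to sign, $\Grads$ acting on $\WqRN$ with natural (Neumann) boundary conditions.
\item By Korn's second inequality on the Lipschitz domain $\Omega$, $\norm[L^q]{\Grads u}\gtrsim\norm[W^{1,q}]{u}$ on a complement of $\rbm_q$. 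Equivalently, $\Grads$ has closed range and kernel $\rbm_q$.
\item The closed range theorem then gives $\Image(\Div)=(\Kernel\Grads)^\circ=\rbm^\circ_p$.
\end{itemize}
Consequently, $\Div$ induces a topological isomorphism $\DpzSN/\Kernel(\Div)\to\rbm^\circ_p$, and for $f\in\rbm^\circ_p$ there is $s_0\in\DpzSN$ with $\Div s_0+f=0$.

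\textbf{Step 2: uniqueness via $\Pi$.} The solution set of $\Div s+f=0$ is $s_0+\Kernel(\Div)$. By the Helmholtz--Weyl decomposition,
\[
\LpSN=\Pi\LpSN\oplus\Grads\WpRN .
\]
For the distinguished representative, set $s_f:=s_0-\Pi s_0$. Then $\Pi s_f=0$.

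We must check that $s_f$ is still admissible, i.e.\ that the divergence-free part $\Pi s_0$ lies in $\DpzSN\cap\Kernel(\Div)$. I expect this to be the main obstacle. The decomposition has to be set up so that its divergence-free component carries the homogeneous normal trace, i.e.\ it is the $L^p$ analogue of the decomposition whose gradient part is orthogonal to Neumann-divergence-free fields. Granting this, we have $\Kernel(\Div)\cap\DpzSN=\Pi\LpSN\cap\DpzSN$.

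For uniqueness, let $s_1,s_2$ both satisfy the constraints. Their difference lies in $\Kernel(\Div)$, so $\Pi(s_1-s_2)=s_1-s_2$. But also $\Pi(s_1-s_2)=\Pi s_1-\Pi s_2=0$. Hence $s_1=s_2=s_f$. The admissible $s$ is therefore unique, and Problem~\ref{Prob:DDSPN0} reduces to $\inf_{\tilde s\in\mathscr S}\frac1p\norm[\LpSN]{s_f-\tilde s}^p$.

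\textbf{Step 3: proximinality of $\mathscr S$.} An element $\tilde s\in\mathscr S$ is a measurable function with values in the finite set $\mathfrak S$. The objective is an integral of a pointwise quantity,
\[
\int_\Omega |s_f(\omega)-\tilde s(\omega)|^p\,d\omega ,
\]
so it is minimized pointwise. Fix an ordering of $\mathcal M$ and define
\[
\tilde s(\omega):=\mathfrak s_{m(\omega)},\qquad m(\omega):=\text{the first index minimizing } |s_f(\omega)-\mathfrak s_m| .
\]
\begin{itemize}
\item \emph{Measurability.} Each set $\{\omega:m(\omega)=m\}$ is a finite Boolean combination of the measurable sets $\{|s_f-\mathfrak s_m|\le|s_f-\mathfrak s_{m'}|\}$.
\item \emph{Integrability.} $\tilde s$ is bounded and $\Omega$ is bounded, so $\tilde s\in\LpSN$.
\item \emph{Optimality.} For any competitor $\hat s\in\mathscr S$ we have $|s_f-\tilde s|\le|s_f-\hat s|$ a.e., so $\tilde s$ is a minimizer.
\end{itemize}
Thus the infimum is attained by $(s_f,\tilde s)$.

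\textbf{Non-uniqueness of $\tilde s$.} On the set where two data points are equidistant from $s_f(\omega)$, either choice is optimal. For example, take $f=0$, so $s_f=0$, and $\mathfrak S=\{\sigma,-\sigma\}$ with $\sigma\neq0$. Then any measurable selection of signs is a minimizer, which shows that $\tilde s$ need not be unique.
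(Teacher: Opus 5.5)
Your proposal is correct and follows essentially the same route as the paper. Existence comes from the closed range theorem for $\Div$, whose adjoint is $-\Grads$ with kernel the rigid motions. The splitting $\LpSN=M\oplus N$ then isolates $s_f=s_0-\Pi s_0$ and gives uniqueness through $M\cap N=\{0\}$. Finally, a first-index measurable nearest-point selection on the finite set $\mathfrak{S}$ produces $\tilde s$.

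The property you flag and then grant is built into the paper's Helmholtz--Weyl theorem, which defines $N:=\Kernel(\Div|_{\DpzSN})\subset\DpzSN$. Your pointwise-optimality argument and the $\{\sigma,-\sigma\}$ non-uniqueness example simply make explicit what the paper leaves implicit. The paper's minimal-norm property is not needed for this statement.
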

\end{framed}

The proof of this main theorem rests on two fundamental ingredients. First, the divergence operator induces a topological isomorphism between the quotient space $\DpzSN/\Kernel\left(\Div|_{\DpzSN}\right)$ and the space $\rbm^\circ_p$, which guarantees the existence of admissible stress fields for every balanced loading field $f$. The additional constraint $\Pi s = 0$ then selects the unique representative of minimal norm within the corresponding solution equivalence class.  Second, the finiteness of the stress data set $\mathfrak{S}$ guarantees that the associated admissible auxiliary stress fields $\mathscr{S}$ are proximinal in $\LpSN$. Consequently, the distance-minimization problem admits solutions.\\

The organization of this manuscript is as follows. Section 2 introduces the mathematical machinery required for the analysis, and Section 3 presents the proof of the main result.

\section{Preliminaries}

\begin{framed}   
\begin{theorem}[Poincaré's Inequality \cite{ErnGuermond2004}]
Let %$\Omega$ and $\Gamma$ be as indicated above and let
$1 \leq p \leq \infty$. Then there exists a constant $C = C(\Omega,p) > 0$
such that for every $u\in\WpRN$,
$$
\|u - u_{\Omega}\|_{\LpRN}
\le
C \, \|\nabla u\|_{\LpRN},
\qquad\text{where}\qquad
u_{\Omega}
:=
\frac{1}{|\Omega|}\int_{\Omega}u(\omega)~d\omega.
$$    
\end{theorem}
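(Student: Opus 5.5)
The statement is the classical Poincaré--Wirtinger inequality, quoted from \cite{ErnGuermond2004}, so I would prove it by the standard compactness argument, componentwise for $u=(u_1,\dots,u_N)$. It suffices to treat scalar functions, since both sides are comparable to sums over components. Because $\Omega$ is bounded, the statement is uniform in the sense that the cases $p<\infty$ and $p=\infty$ can be handled by the same contradiction argument, as long as the needed compact embedding is available.

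\textbf{Step 1 (setup of the contradiction).} Suppose no such $C$ exists. Then there is a sequence $u_k\in W^{1,p}(\Omega)$ with
\[
\|u_k-(u_k)_\Omega\|_{L^p}>k\|\nabla u_k\|_{L^p}.
\]
Set $v_k:=(u_k-(u_k)_\Omega)/\|u_k-(u_k)_\Omega\|_{L^p}$. Then $(v_k)_\Omega=0$, $\|v_k\|_{L^p}=1$ and $\|\nabla v_k\|_{L^p}<1/k$. Hence $(v_k)$ is bounded in $W^{1,p}(\Omega)$.

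\textbf{Step 2 (compactness).} This is the main obstacle: it needs the Rellich--Kondrachov theorem on a bounded Lipschitz domain, which relies on the Lipschitz regularity of $\Gamma$ through an extension operator.
\begin{itemize}
\item For $1\le p<\infty$, the embedding $W^{1,p}(\Omega)\hookrightarrow L^p(\Omega)$ is compact. A subsequence therefore converges strongly in $L^p$ to some $v$.
\item For $p=\infty$, I would use that $W^{1,\infty}(\Omega)$ coincides with the Lipschitz functions on a Lipschitz domain, with locally uniform constants. Arzelà--Ascoli then gives uniform convergence of a subsequence.
\end{itemize}
In either case $\|v\|_{L^p}=1$ and $v_\Omega=0$, because the mean is continuous under $L^p$ convergence on a bounded set.

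\textbf{Step 3 (identification of the limit).} For any test function $\varphi$,
\[
\int_\Omega v\,\partial_i\varphi=\lim_{k\to\infty}\int_\Omega v_k\,\partial_i\varphi=-\lim_{k\to\infty}\int_\Omega\partial_i v_k\,\varphi=0,
\]
since $\|\nabla v_k\|_{L^p}\to 0$. Thus $\nabla v=0$ in the distributional sense. Because $\Omega$ is a domain, and hence connected, $v$ is a.e. constant, and $v_\Omega=0$ forces $v=0$. This contradicts $\|v\|_{L^p}=1$.

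Connectedness is essential at this step: the constant $C=C(\Omega,p)$ depends on $\Omega$ only through this argument, and the inequality fails on disconnected sets.
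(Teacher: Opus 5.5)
Your proof is correct. The paper gives no proof of its own here; the inequality is simply quoted from \cite{ErnGuermond2004}. Your normalization--compactness--contradiction argument is the standard proof of the Poincar\'e--Wirtinger inequality. Compared with the bare citation, it shows where each hypothesis is used: boundedness and the Lipschitz boundary through Rellich--Kondrachov (valid also for $p=1$), and connectedness in identifying the limit as a constant.

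The one step worth stating more sharply is $p=\infty$. What you actually need is the global bound $|u(x)-u(y)|\le C_\Omega\|\nabla u\|_{L^\infty}|x-y|$ for all $x,y\in\Omega$, which holds because bounded Lipschitz domains are quasiconvex. Purely local Lipschitz control would give only locally uniform convergence, which need not preserve $\|v\|_{L^\infty}=1$. Indeed, on some bounded connected domains of infinite intrinsic diameter the $p=\infty$ inequality genuinely fails. Once the global bound is available, the case $p=\infty$ needs no compactness at all, since $|u(x)-u_\Omega|\le C_\Omega\,\mathrm{diam}(\Omega)\,\|\nabla u\|_{L^\infty}$ for every $x\in\Omega$.
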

\end{framed}

\begin{framed}
\begin{theorem}[Korn's Second Inequality \cite{ErnGuermond2004}]
Let %$\Omega$ and $\Gamma$ be as indicated above and let
$1 < p < \infty$.
Then there exists a constant $C = C(\Omega,p) > 0$ such that for every
$u\in\WpRN$,
$$
\inf_{w\in\rbm_p}
\|\Grad u - \Grad w\|_{\LpMN}
\;\le\;
C \,\|\Grads u\|_{\LpMN}.
$$
\end{theorem}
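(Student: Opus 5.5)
The statement to prove is Korn's Second Inequality. I would reduce it to the classical inequality with the full $W^{1,p}$ norm,
$$\|u\|_{\WpRN}\le C\bigl(\|u\|_{\LpRN}+\|\Grads u\|_{\LpMN}\bigr),$$
and then factor out rigid motions by a compactness argument.

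\emph{Step 1: the full-norm inequality.} The key input is the pointwise identity
$$\partial_j\partial_k u_i=\partial_j\varepsilon_{ik}+\partial_k\varepsilon_{ij}-\partial_i\varepsilon_{jk},\qquad \varepsilon:=\Grads u .$$
It gives $\nabla(\partial_k u_i)\in W^{-1,p}$ with norm controlled by $\|\varepsilon\|_{\LpMN}$. I would then invoke the Lions/Ne\v{c}as lemma on Lipschitz domains for $1<p<\infty$. It states that if $g\in W^{-1,p}$ and $\nabla g\in W^{-1,p}$, then $g\in L^p$, with
$$\|g\|_{L^p}\le C\bigl(\|g\|_{W^{-1,p}}+\|\nabla g\|_{W^{-1,p}}\bigr).$$
Applying it with $g=\partial_k u_i$ and noting $\|\partial_k u_i\|_{W^{-1,p}}\le\|u_i\|_{L^p}$ yields the full-norm inequality. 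This is the main obstacle. The Ne\v{c}as lemma is equivalent to the surjectivity of the divergence onto mean-zero $L^q$ functions (Bogovski\u{\i}) on Lipschitz domains, and I would cite it rather than reprove it, as in \cite{ErnGuermond2004}. The restriction $1<p<\infty$ enters exactly here, since the Calder\'on--Zygmund-type estimates behind the lemma fail at the endpoints.

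\emph{Step 2: the kernel of $\Grads$.} If $\Grads u=0$, the identity above gives $\nabla\nabla u=0$ in the sense of distributions. Since $\Omega$ is connected, $u$ is affine, $u(\omega)=A\omega+b$. Symmetric-part-zero forces $A\in\AN$, so $u\in\rbm_p$. Thus $\Kernel\Grads=\rbm_p$, which is finite dimensional.

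\emph{Step 3: compactness.} Let $P$ be a continuous projection of $\WpRN$ onto $\rbm_p$, for instance defined by finitely many $L^2$ moments. It suffices to show $\|u-Pu\|_{\WpRN}\le C\|\Grads u\|_{\LpMN}$, because $\|\Grad u-\Grad w\|_{\LpMN}\le\|u-w\|_{\WpRN}$ with $w=Pu\in\rbm_p$. I would argue by contradiction. Suppose there are $u_n$ with $Pu_n=0$, $\|u_n\|_{\WpRN}=1$ and $\|\Grads u_n\|_{\LpMN}\to0$. By Rellich--Kondrachov on the bounded Lipschitz domain, a subsequence converges in $\LpRN$. Applying Step 1 to differences $u_n-u_m$ shows the sequence is Cauchy in $\WpRN$, so $u_n\to u$ there. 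The limit satisfies $\Grads u=0$, hence $u\in\rbm_p$ by Step 2. But $Pu=\lim Pu_n=0$, so $u=0$, contradicting $\|u\|_{\WpRN}=1$.

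Taking the infimum over $w\in\rbm_p$ then gives the stated inequality with $C=C(\Omega,p)$. Poincar\'e's inequality is not needed beyond the compactness step.
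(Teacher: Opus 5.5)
Your proof is correct. The paper gives no argument of its own for this statement (it simply quotes it from \cite{ErnGuermond2004}), and your route is the standard proof behind that citation: feed the identity $\partial_j\partial_k u_i=\partial_j\varepsilon_{ik}+\partial_k\varepsilon_{ij}-\partial_i\varepsilon_{jk}$ into the $L^p$ Ne\v{c}as/Lions lemma to get the full-norm Korn inequality, then run a Rellich--Kondrachov compactness argument modulo the finite-dimensional kernel $\rbm_p$. The two hypotheses you actually use are connectedness of $\Omega$ (to get $\Kernel \Grads = \rbm_p$) and $1<p<\infty$ (for the Ne\v{c}as inequality), and both are implicit in the statement.
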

\end{framed}

\begin{remark}[Failure of Korn's inequality for $p=1$ \cite{Ornstein1962}]
There exists no constant $C>0$ such that
$$
\inf_{w\in\rbm_1}
\|\Grad u - \Grad w\|_{\LoneMN}
\;\le\;
C \,\|\Grads u\|_{\LoneMN}
~\forall~ u \in \testRN.
$$
In other words, the full gradient cannot be controlled by the symmetric
part of the gradient, even modulo rigid-body motions.  
In particular, there exist smooth vector fields for which
$
\|\Grads u\|_{\LoneMN} \to 0
$
while
$
\|\Grad u\|_{\LoneMN} \to \infty
$.
\end{remark}

\begin{remark}[Failure of Korn's inequality for $p=\infty$ \cite{Mityagin1958,deLeeuwMirkil1964}]
There exists no constant $C>0$ such that
$$
\inf_{w\in\rbm_\infty}
\|\Grad u - \Grad w\|_{\LinfMN}
\;\le\;
C \,\|\Grads u\|_{\LinfMN}
~\forall~u\in \WoneinfRN.
$$
Again, the full gradient cannot be controlled by its symmetric
part, even modulo rigid-body motions.
In particular, there exist smooth vector fields for which
$
\|\Grads u\|_{\LinfMN} \to 0
$
while
$\|\Grad u\|_{\LinfMN} \not\to 0
$.
\end{remark}

\begin{framed}
\begin{lemma}[Traces in Sobolev spaces \cite{ContiMuellerOrtiz2020}]
%Let $\Omega$, $\Gamma$, $p$ and $q$ be as introduced above. 
Since $C^1(\overline{\Omega},\RN)$ is dense in $\WpRN$ and $C^1(\overline{\Omega},\SN)$ is dense in $\DqSN$, the following hold:
\begin{enumerate}
\item There exists a surjective bounded linear map
$\Td:\WpRN\to\W^{1-1/p,p}(\Gamma,\RN)$
such that $\Td u = u|_\Gamma~\forall~u\in C^1(\overline{\Omega},\RN)$ and $\Kernel(\Td)= \WpzRN$.
\item There exists a surjective bounded linear map
$\Tn:\DqSN\to W^{-1/q,q}(\Gamma,\RN)$
such that $\Tn s = (s \nu)|_\Gamma~\forall~s\in C^1(\overline{\Omega},\SN)$,
where $\nu$ denotes the outer unit normal on $\Gamma$, and
$\Kernel(\Tn)= \DqzSN$.
\end{enumerate}    
\end{lemma}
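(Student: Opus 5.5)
The statement is a standard trace result. I would prove part 1 by the classical Gagliardo argument and then \emph{define} the normal trace in part 2 by duality through Green's formula, using part 1 as a lifting tool. For part 1, I would cover $\Gamma$ by finitely many Lipschitz charts in which $\Omega$ is locally the epigraph of a Lipschitz function, and use a subordinate partition of unity. For $u\in C^1(\overline{\Omega},\RN)$ the fundamental theorem of calculus along the vertical direction, combined with the Gagliardo seminorm estimate, gives
$$\|u|_\Gamma\|_{W^{1-1/p,p}(\Gamma;\RN)}\le C\|u\|_{\WpRN}.$$
Density of $C^1(\overline{\Omega},\RN)$ then extends $u\mapsto u|_\Gamma$ uniquely to a bounded $\Td$.

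For surjectivity of $\Td$, I would construct a bounded right inverse (lifting) $E:W^{1-1/p,p}(\Gamma;\RN)\to\WpRN$. In each chart this is a mollified harmonic-type extension $Eg(x',t)=(\rho_t*g)(x')\,\chi(t)$, glued by the partition of unity. For the kernel, the inclusion $\WpzRN\subset\Kernel(\Td)$ follows because $\Td$ vanishes on $C_c^\infty(\Omega;\RN)$ and is continuous. For the converse, given $u$ with $\Td u=0$, I would show that $u$ can be approximated by compactly supported functions. The route is to cut off near $\Gamma$ with $\eta_\varepsilon(\mathrm{dist}(\cdot,\Gamma))$ and control the error term $\int |u|^p|\nabla\eta_\varepsilon|^p$ by a Hardy inequality near the boundary, which uses $\Td u=0$. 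I expect this kernel identification on merely Lipschitz domains to be the most delicate point of part 1.

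For part 2, note first that $1-1/p=1/q$, so $W^{-1/q,q}(\Gamma;\RN)$ is the dual of $W^{1-1/p,p}(\Gamma;\RN)$. For $s\in\DqSN$ and $g\in W^{1-1/p,p}(\Gamma;\RN)$, I would set
$$\langle \Tn s, g\rangle := \int_\Omega s:\Grads u\,d\omega+\int_\Omega \Div s\cdot u\,d\omega,\qquad u\in\WpRN,\ \Td u=g,$$
where $s:\Grad u=s:\Grads u$ because $s$ is symmetric. This is well defined: two liftings of $g$ differ by an element of $\Kernel(\Td)=\WpzRN$. For such an element the right-hand side vanishes, since it vanishes on $C_c^\infty$ by the definition of the distributional divergence and then on all of $\WpzRN$ by density. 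Taking $u=Eg$ gives $|\langle\Tn s,g\rangle|\le C\|s\|_{\DqSN}\|g\|$, so $\Tn$ is bounded and linear. For $s\in C^1(\overline{\Omega},\SN)$ the divergence theorem shows that $\Tn s=(s\nu)|_\Gamma$, which identifies the operator with the classical normal trace on the dense subclass. With $\DqzSN$ defined as the set where $s\nu=0$ in $W^{-1/q,q}$, the identity $\Kernel(\Tn)=\DqzSN$ holds by construction. If instead one wants this space to be the closure of $C_c^\infty(\Omega;\SN)$, one needs a Hahn–Banach argument: an $L^p$-pair annihilating that closure corresponds to some $u\in\WpRN$, and then Green's formula is applied.

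The main obstacle is the surjectivity of $\Tn$, because the representing field must be \emph{symmetric}. Given $\varphi\in W^{-1/q,q}(\Gamma;\RN)$, I would minimize
$$J(u)=\tfrac1p\|\Grads u\|^p_{\LpMN}+\tfrac1p\|u\|^p_{\LpRN}-\langle\varphi,\Td u\rangle$$
over $\WpRN$. Korn's second inequality, together with the $L^p$ term and the finite dimensionality of $\rbm_p$ (argued by contradiction with compactness), shows that $\|\Grads u\|+\|u\|$ is equivalent to the $\WpRN$ norm. Hence $J$ is coercive, strictly convex and weakly lower semicontinuous, so it has a minimizer $u$. The Euler–Lagrange equation says that $s:=|\Grads u|^{p-2}\Grads u\in\LqSN$ has $\Div s=|u|^{p-2}u\in\LqRN$. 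Comparing the Euler–Lagrange equation with the definition of $\Tn$ then gives $\langle\Tn s,g\rangle=\langle\varphi,g\rangle$ for all $g$, that is, $\Tn s=\varphi$. This is the step where the restriction $1<p<\infty$ is essential, through Korn's inequality; the inequality fails at the endpoints, as the remarks above show.
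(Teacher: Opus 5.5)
Your proof is correct. It cannot be compared step by step with the paper's, because the paper gives no proof of this lemma: it imports it from the cited reference, and the quoted density facts are the only justification offered.

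What you supply is a self-contained reconstruction. Part 1 follows the classical route: the Gagliardo estimate and lifting in Lipschitz charts, and $\Kernel(\Td)=\WpzRN$ via the boundary Hardy estimate.

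In part 2, defining $\Tn$ by duality through a lifting of $\Td$ has two consequences. First, it makes the Green identity, which the paper states as a separate cited theorem, true by construction. Second, it shows that density of $C^1(\overline{\Omega},\SN)$ in $\DqSN$ is needed only for uniqueness of $\Tn$, not for its existence.

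The genuinely symmetric-specific step is surjectivity. For general matrix fields it splits row by row into scalar $p$-Laplace Neumann problems. A \emph{symmetric} preimage, however, needs a Korn-type estimate, and your $p$-elasticity Neumann problem with $s=|\Grads u|^{p-2}\Grads u$ provides exactly that; the Euler--Lagrange signs check out. This makes explicit where $1<p<\infty$ enters, which the citation leaves implicit.

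Two minor points.
\begin{itemize}
\item The bound $\|u\|_{\WpRN}\le C(\|u\|_{\LpRN}+\|\Grads u\|_{\LpSN})$ follows from the paper's Korn and Poincar\'e inequalities without any compactness argument. Together they give $r\in\rbm_p$ with $\|u-r\|_{\WpRN}\le C\|\Grads u\|_{\LpSN}$. Since $\rbm_p$ is finite-dimensional, $\|r\|_{\WpRN}\le C\|r\|_{\LpRN}$, and the bound follows.
\item In your optional Hahn--Banach characterization of $\DqzSN$ as a closure, the annihilating pair only yields $v\in\LpRN$ with $\Grads v\in\LpSN$ distributionally. Upgrading this to $v\in\WpRN$ requires the Ne\v{c}as-type Korn inequality, which is stronger than the version stated in the paper because that version presupposes $u\in\WpRN$.
\end{itemize}
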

\end{framed}

\begin{framed}
\begin{theorem}[Green’s identity for the symmetric gradient \cite{Grisvard1985,Ciarlet1988}]
For every $s\in\DqSN$ and $u\in\WpRN$, the following Green's identity holds
$$
\sprod[\LqRN][\LpRN]{\Div s}{u}+\sprod[\LqSN][\LpSN]{s}{\Grads u} = \sprod[W^{-1/q,q}(\Gamma,\RN)][W^{1-1/p,p}(\Gamma,\RN)]{\Tn s}{\Td u}. 
$$
\end{theorem}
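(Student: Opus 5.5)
The plan is the usual one: prove the identity for smooth fields with the classical divergence theorem, then extend it to all of $\DqSN\times\WpRN$ by density. Both the density and the bounded extensions of the traces are available from the Lemma on traces in Sobolev spaces.

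\emph{Smooth case.} Take $s\in C^1(\overline{\Omega},\SN)$ and $u\in C^1(\overline{\Omega},\RN)$. Apply the Gauss--Green theorem on the bounded Lipschitz domain $\Omega$ to the vector field $s u$, with components $(su)_i=\sum_j s_{ij}u_j$ (using $s=s^T$). This gives
$$
\int_\Omega \Div s\cdot u\,d\omega+\int_\Omega s:\Grad u\,d\omega=\int_\Gamma (s\nu)\cdot u\,d\Gamma .
$$
Since $s$ is symmetric and $\SN\perp\AN$ in the Frobenius product, we have $s:\Grad u=s:\Grads u$. Hence the anti-symmetric part of the gradient drops out. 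By the Lemma, the boundary term equals $\sprod[W^{-1/q,q}(\Gamma,\RN)][W^{1-1/p,p}(\Gamma,\RN)]{\Tn s}{\Td u}$, because $\Tn s=(s\nu)|_\Gamma$ and $\Td u=u|_\Gamma$ for such fields. The boundary integral of an $L^q$ function against an $L^p$ function coincides with this duality pairing.

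\emph{Continuity.} Define $B_1(s,u)$ as the left-hand side and $B_2(s,u)$ as the right-hand side, viewed as bilinear forms on $\DqSN\times\WpRN$ with the graph norms.
\begin{itemize}
\item By Hölder's inequality,
$|B_1(s,u)|\le \|\Div s\|_{\LqRN}\|u\|_{\LpRN}+\|s\|_{\LqSN}\|\Grads u\|_{\LpSN}\le \|s\|_{\DqSN}\|u\|_{\WpRN}$.
\item By the boundedness of $\Tn$ and $\Td$ from the Lemma,
$|B_2(s,u)|\le \|\Tn\|\,\|\Td\|\,\|s\|_{\DqSN}\|u\|_{\WpRN}$.
\end{itemize}
So both forms are jointly continuous.

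\emph{Density.} Given general $(s,u)$, choose $s_k\in C^1(\overline{\Omega},\SN)$ with $s_k\to s$ in $\DqSN$, and $u_k\in C^1(\overline{\Omega},\RN)$ with $u_k\to u$ in $\WpRN$. By joint continuity, $B_i(s_k,u_k)\to B_i(s,u)$ for $i=1,2$. Since $B_1(s_k,u_k)=B_2(s_k,u_k)$ for every $k$, the identity passes to the limit.

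\emph{Main point of care.} The step that needs the most attention is the density of $C^1(\overline{\Omega},\SN)$ in $\DqSN$ for Lipschitz $\Omega$, together with the fact that the extended $\Tn$ agrees with $s\nu$ on smooth fields. Both are taken as given from the Lemma. If one wanted to avoid assuming density in $\DqSN$, one could instead take the identity as the definition of $\Tn s$ on $\Td(\WpRN)=W^{1-1/p,p}(\Gamma,\RN)$. In that case one must check well-definedness: the left-hand side must vanish for $u\in\WpzRN$. This follows from density of $\testRN$ in $\WpzRN$ and the distributional definition of $\Div s$, tested against the symmetric matrix $s$.
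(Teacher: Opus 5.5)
Your proof is correct and is essentially the argument behind the statement. The paper gives no proof of its own and only cites Grisvard and Ciarlet, where the identity is obtained exactly as you do it: the classical Gauss--Green formula for $C^1$ fields (with $s:\Grad u=s:\Grads u$ by symmetry), then extension to $\DqSN\times\WpRN$ by joint continuity, using the density and trace-boundedness facts recorded in the trace lemma. Your closing alternative, defining $\Tn s$ through the identity after checking that the left-hand side vanishes on $\WpzRN$, is the other standard way those references construct the normal trace.
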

\end{framed}

The following result is a regular adaptation of the duality argument from Hilbert spaces, as given in \cite{gudoshnikov2025stress} to the $W^p$ setting. The argument relies crucially on reflexivity, which holds here for $1 < p < \infty$.

\begin{framed}
\begin{theorem}[Duality $\Grads$--$\Div$]
Let $\DqzSN$ be defined as the subspace of $\DqSN$ such that
$$
\sprod[W^{-1/q,q}(\Gamma,\RN)][W^{1-1/p,p}(\Gamma,\RN)]{T_\nu s}{T_0 u}=0~\forall~u\in\WpRN. 
$$
Then, by considering $\LpRN$ and $\LpSN$ and their corresponding duality pairings and regarding $\Grads|_{\WpRN}$ as an unbounded operator that maps dense subspaces of $\WpRN$ of $\LpRN$ to $\LpSN$,
$$
\Grads|_{\WpRN}^* = -\Div|_{\DqzSN}
$$
holds in the the sense of distributions.    
\end{theorem}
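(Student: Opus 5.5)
We need to prove the duality theorem: the adjoint of $\Grads$, viewed as an unbounded operator from $\LpRN$ to $\LpSN$ with domain $\WpRN$, is $-\Div$ restricted to $\DqzSN$. The plan is to prove two inclusions of graphs, working with the duality pairings $\LpRN$--$\LqRN$ and $\LpSN$--$\LqSN$.

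\textbf{Step 1: the operator is densely defined.} $\WpRN$ is dense in $\LpRN$, since it contains $\testRN$, which is dense in $L^p$ for $1<p<\infty$. Hence the adjoint $\Grads^*$ is a well-defined single-valued operator $\LqSN\supset D(\Grads^*)\to\LqRN$. By definition, $s\in D(\Grads^*)$ if and only if there exists $g\in\LqRN$ with
\[
\sprod[\LqSN][\LpSN]{s}{\Grads u}=\sprod[\LqRN][\LpRN]{g}{u}\quad\text{for all } u\in\WpRN,
\]
and then $\Grads^* s=g$.

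\textbf{Step 2: $-\Div|_{\DqzSN}\subset\Grads^*$.} Let $s\in\DqzSN$. Green's identity for the symmetric gradient gives
\[
\sprod{s}{\Grads u}=-\sprod{\Div s}{u}+\sprod{\Tn s}{\Td u}.
\]
The boundary term vanishes for every $u\in\WpRN$ by the defining property of $\DqzSN$. Since $\Div s\in\LqRN$, we conclude $s\in D(\Grads^*)$ with $\Grads^* s=-\Div s$.

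\textbf{Step 3: $\Grads^*\subset-\Div|_{\DqzSN}$.} This is the main step. Let $s\in D(\Grads^*)$ and set $g=\Grads^*s$.
\begin{itemize}
\item \emph{Interior condition.} Test first with $u\in\testRN$. The defining identity then says exactly that the distributional divergence satisfies $\Div s=-g$, using the symmetry of $s$ so that $s:\Grads u=s:\Grad u$. Hence $\Div s\in\LqRN$ and $s\in\DqSN$.
\item \emph{Boundary condition.} Now Green's identity is available for $s$ and arbitrary $u\in\WpRN$. Subtracting it from the defining identity of the adjoint gives $\sprod{\Tn s}{\Td u}=0$ for all $u\in\WpRN$, which is precisely membership in $\DqzSN$ as defined in the theorem.
\end{itemize}
Surjectivity of $\Td$ from the trace lemma further upgrades this to $\Tn s=0$ in $W^{-1/q,q}(\Gamma,\RN)$, consistent with the kernel characterization $\Kernel(\Tn)=\DqzSN$.

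\textbf{Main obstacle.} The only delicate point is justifying Green's identity and the traces for merely $\DqSN$ fields. This is supplied by the density of $C^1(\overline{\Omega},\SN)$ and the trace lemma, both assumed from earlier in the paper. Reflexivity for $1<p<\infty$ ensures that the dual spaces are the $L^q$ spaces used above. It also makes $\Grads$ closable, so that $\Grads^{**}$ equals its closure; this is not needed for the identity itself.
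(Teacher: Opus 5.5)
Your argument is correct and complete. The paper gives no proof of this statement. It only points to the Hilbert-space duality argument of \cite{gudoshnikov2025stress} and asserts that the $L^p$ adaptation ``relies crucially on reflexivity.'' Your two-inclusion argument is the standard argument that citation stands for:
\begin{itemize}
\item Green's identity with vanishing boundary term gives $-\Div|_{\DqzSN}\subset\Grads^*$.
\item Conversely, testing the adjoint identity with $\testRN$ gives $\Div s=-\Grads^* s\in\LqRN$. Subtracting Green's identity then gives $\langle \Tn s,\Td u\rangle=0$ for all $u\in\WpRN$.
\end{itemize}
It yields equality of graphs, which is stronger than the ``in the sense of distributions'' phrasing of the statement.

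One correction to your closing remark. The identification $(\LpRN)^*\cong\LqRN$ is the Riesz representation theorem, valid for every $1\le p<\infty$; it is not a consequence of reflexivity. Your proof in fact never uses reflexivity. Reflexivity only matters later, in the existence proof, where the paper writes $(-\Div|_{\DpzSN})^*=\Grads^{**}=\Grads$. There one needs $\Grads$ with domain $\WpRN$ to be closed, not merely closable. This follows from the standard form of Korn's second inequality, $\|u\|_{\WpRN}\le C(\|u\|_{\LpRN}+\|\Grads u\|_{\LpSN})$, which makes the graph norm equivalent to the $W^{1,p}$-norm.
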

\end{framed}

\begin{framed}
\begin{theorem}[Helmholtz-Weyl decomposition]
Introduce the subspaces
$$
M :=\lbrace\Grads u : u \in\WpRN\rbrace = \Image(\Grads|_{\WpRN})% = \Image(B)
$$
and
$$
N :=\lbrace s \in\DpzSN:\Div s = 0\in\LpRN\rbrace = \Kernel\left(\Div|_{\DpzSN}\right).% = \Kernel(B^*).
$$
Then, $M$ and $N$ are closed in $\LpSN$, and $M\oplus N = \LpSN$.  
\end{theorem}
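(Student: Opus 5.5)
The plan is to derive the decomposition from the closed range theorem in reflexive Banach spaces, using the duality $\Grads$--$\Div$ and Korn's second inequality. Set $B:=\Grads|_{\WpRN}$, viewed as a densely defined operator from $\LpRN$ into $\LpSN$. By the preceding duality theorem (with the roles of $p$ and $q$ exchanged where needed), $B^*=-\Div|_{\DqzSN}$. We then identify $M=\Image(B)$ and $N=\Kernel(B^*)$, with the dual pairing taken between $L^p$ and $L^q$.

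\textbf{Step 1: closedness of $M$.} Take $\Grads u_k\to g$ in $\LpSN$. By Korn's second inequality, there are rigid motions $w_k\in\rbm_p$ with
\[
\|\Grad(u_k-w_k)\|_{\LpMN}\le C\|\Grads u_k\|_{\LpMN}.
\]
Normalize $v_k:=u_k-w_k-(u_k-w_k)_\Omega$; we can arrange this since constants are rigid motions. By Poincaré's inequality, $(v_k)$ is then Cauchy in $\WpRN$: apply the Korn estimate to differences, which requires the infimum to be attained by a linear selection. The cleanest way to get this is to work in the closed complement of $\rbm_p$, for instance the set of $u$ with zero mean and zero mean antisymmetric gradient. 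There Korn combined with Poincaré gives $\|u\|_{W^{1,p}}\le C\|\Grads u\|_{L^p}$ by the standard compactness (Rellich) contradiction argument. Hence $v_k\to v$ in $\WpRN$, and since $\Grads v_k=\Grads u_k$ we get $g=\Grads v\in M$. The same argument with $q$ shows that $\Image(\Grads|_{\WqRN})$ is closed.

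\textbf{Step 2: closedness of $N$ and the direct sum.} $N$ is closed because $\Div|_{\DpzSN}$ is a closed operator: its graph is closed in $L^p\times L^p$, and the trace $\Tn$ is continuous on the graph norm. For the decomposition, I would use annihilators, following the Banach closed range theorem. In the reflexive setting, $(\Image B)^\perp=\Kernel(B^*)$ and, since $\Image B$ is closed, $\Image(B^*)=\Kernel(B)^\perp$ is closed as well. Green's identity shows that $M_p:=\Image(\Grads|_{\WpRN})$ annihilates $N_q:=\Kernel(\Div|_{\DqzSN})$, and conversely that $M_q$ annihilates $N_p$. So $M\cap N$ consists of elements $s=\Grads u$ with $\Div s=0$ and $s\nu=0$.

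The hardest step is showing $M\cap N=\{0\}$ and $M+N=\LpSN$ for $p\neq 2$, where no orthogonality is available. My first attempt would be to solve a Neumann elasticity problem. Given $v\in\LpSN$, find $u\in\WpRN$ such that
\[
\langle \Grads u,\Grads \varphi\rangle=\langle v,\Grads\varphi\rangle \quad \text{for all } \varphi\in\WqRN.
\]
This is the weak $L^p$ Lamé--Neumann problem, whose solvability modulo $\rbm$ needs an inf-sup condition
\[
\sup_\varphi \frac{\langle\Grads u,\Grads\varphi\rangle}{\|\Grads\varphi\|_{L^q}}\ge c\|\Grads u\|_{L^p}.
\]
This holds for the $L^p$ regularity of the Lamé system on Lipschitz domains, at least for $p$ near $2$. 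Given such a $u$, set $\Pi v:=v-\Grads u$. Then $\Div\Pi v=0$ weakly and $\Tn\Pi v=0$, so $\Pi v\in N$. The same inf-sup estimate gives uniqueness, hence $M\cap N=\{0\}$. Boundedness of the resulting projector then follows from the closed graph theorem.

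I expect this $L^p$ inf-sup step to be the main obstacle. If it cannot be obtained on general Lipschitz domains for all $p$, I would fall back on proving the direct sum for $p=2$ via orthogonality, and then extend it to $p$ in a neighborhood of $2$ by Šneiberg-type stability of the isomorphism.
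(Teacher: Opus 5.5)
Your Steps 1 and 2 are correct. Step 1 differs from the paper only in technique. You get closedness of $M$ from the lower bound $\|u\|_{\WpRN}\le C\|\Grads u\|_{\LpSN}$ on a closed complement of $\rbm_p$ (zero mean and zero mean skew-gradient), which gives closed range at once. The paper instead extracts a weakly convergent subsequence via reflexivity. Both work. (Bookkeeping: $\Kernel(B^*)$ lives in $L^q$; what you actually use is $M_p^\circ=N_q$ and, since $M_q$ is closed, $N_p^\circ=M_q$.)

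The gap is the direct sum. You obtain $M\oplus N=\LpSN$ only under an $L^p$ inf-sup for the pure traction problem $\langle\Grads u,\Grads\varphi\rangle=\langle v,\Grads\varphi\rangle$ for all $\varphi\in\WqRN$. You can justify this for $p$ near $2$ (\v{S}neiberg from the orthogonal $L^2$ case), or, citing elliptic $L^p$ theory, for all $p$ on $C^1$ domains. You cannot justify it for every $p\in(1,\infty)$ on an arbitrary bounded Lipschitz domain, which is what the statement claims.

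You have, however, isolated exactly the ingredient that the paper's proof does not supply. In Part III its chain of inequalities contains the step
\[
\sup_{0\neq[u]}\frac{|\langle\Grads\varphi,\Grads[u]\rangle|}{C_p\|\Grads[u]\|_{\LpSN}}\ \ge\ \sup_{0\neq s\in\LpSN}\frac{|\langle\Grads\varphi,s\rangle|}{C_p\|s\|_{\LpSN}},
\]
which goes the wrong way. The left side is a supremum over the subspace $M$, hence at most the right side. The reverse inequality is precisely the assertion that $M$ norms $\Grads\WqRN$, i.e.\ your inf-sup.

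Moreover, the resulting bound is of type $\inf_\varphi\sup_{[u]}$. By the closed range theorem this gives surjectivity of $[u]\mapsto\langle\Grads\,\cdot\,,\Grads[u]\rangle$, not the injectivity being claimed. Also, since only $\varphi\in\WqzRN$ is used, the boundary condition in $N$ never enters, so the inference cannot be right. Take $u=\nabla h$ with $h=x_1^2-x_2^2$: then $\Div\Grads u=\nabla\Delta h=0$, hence $\langle\Grads\varphi,\Grads u\rangle=0$ for all $\varphi\in\WqzRN$, yet $\Grads u=\nabla^2 h\neq 0$. The bound $\|s_{M_n}\|\le C\|s_n\|$ used for closedness of $M+N$ is the same missing estimate.

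Nor can that estimate be proved in the stated generality. The decomposition is equivalent to unique $W^{1,p}$-solvability (modulo $\rbm_p$) of the traction problem with $L^p$ data. As for the classical $L^p$ Helmholtz decomposition on Lipschitz domains, this holds in general only for $|1/p-1/2|$ small. An L-shaped domain carries traction-free corner solutions $u\sim r^{\lambda}\Phi(\theta)$ with $\lambda\approx 0.54$, which lie in $H^1$ but not in $W^{1,p}$ for $p\ge 2/(1-\lambda)$. Take bounded data whose $H^1$ solution is such a cut-off corner solution. Since $W^{1,p}\subset H^1$ and the $L^2$ decomposition is unique, $\LpSN=M\oplus N$ fails for these $p$, and by duality for their conjugates. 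So your restricted conclusion is the correct form of the theorem.

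When writing it up, note that the inf-sup at exponent $p$ only yields uniqueness and closed range. Existence also needs injectivity at the conjugate exponent (Banach--Ne\v{c}as--Babu\v{s}ka). Half of this is elementary:
\begin{itemize}
\item for $p\ge 2$, $M\cap N=\{0\}$ by testing with $\varphi=u\in\WpRN\subset\WqRN$;
\item for $p\le 2$, $M+N$ is dense because $(M_p+N_p)^\circ=M_q\cap N_q=\{0\}$ by the same argument.
\end{itemize}
The real content is the a priori estimate at the exponent $\min(p,q)$.
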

\end{framed}

\begin{proof}
\textbf{Part I -- Closedness of $M$.}
Let $\{s_n\} \subset M$ with $s_n \to s$ in $\LpSN$.  
For each $n$, choose $u_n\in\WpRN$ such that
$
s_n = \Grads u_n.
$
Replace $u_n$ by its equivalence class $[u_n]$ in the quotient space $\in\WpRN/\rbm_p$,
so that symmetric gradients are unchanged.  
In this quotient setting, Korn's second inequality applies directly.  
Hence, there exists $C>0$ such that
$$
\|\Grad u_n\|_{\LpMN}
\leq
C\|\Grads u_n\|_{\LpSN}
=
C\|s_n\|_{\LpSN} .
$$
Since $s_n \to s$ in $\LpSN$, the sequence $\{\Grads u_n\}$ is bounded in $\LpMN$.  
By Poincaré's inequality, $\{u_n\}$ is bounded in $\WpRN$.
Because $1 < p < \infty$, the space $\WpRN$ is reflexive.  
Hence there exists $\{u_{n_k}\}$ and $u\in\WpRN$ such that
$
u_{n_k}\rightharpoonup u\in\WpRN.
$
In particular, $\Grads u_{n_k}\rightharpoonup\Grads u\in\LpSN$.
But since $s_n = \Grads u_n\to s$ in $\LpSN$, the weak limit must coincide with the strong limit. Thus
$
\Grads u = s.
$
Hence $s\in M$, proving that $M$ is closed in $\LpSN$.

\medskip

\textbf{Part II -- Closedness of $N$.}
The space $\DpzSN$ is a Banach space when equipped with the graph norm
$$
\|s\|_{\Div}
=
\|s\|_{\LpSN}+\|\Div s\|_{\LpRN}~\forall~s\in\DpzSN.
$$
Since $N = \Kernel\left(\Div|_{\DpzSN}\right)$ is the kernel of a bounded linear operator between Banach spaces, it follows that $N$ is closed.
\medskip

\textbf{Part III -- The decomposition.} To show that $M$ and $N$ span $\LpSN$, we observe that if $s\in M\cap N$, then
$$
\sprod[\LqSN][\LpSN]{\Grads\varphi}{s}
=
-\sprod[\testRN][\distRN]{\varphi}{\Div s}
=
0
~\forall~\varphi \in \testRN.
$$
Since $s \in M$, there exists a unique class $[u]\in\WpRN/\rbm_p$ such that $s = \Grads[u]$. By the density of $\testRN$ in $\WqzRN$, the above identity extends to
$$\sprod[\LqSN][\LpSN]{\Grads\varphi}{\Grads u} = 0~\forall~\varphi\in\WqzRN.$$
Applying Korn's first inequality combined with Poincaré's inequality, we have
$$
\|\varphi\|_{\WqRN}\leq C_q\|\Grads\varphi\|_{\LqSN}~\forall~\varphi\in\WqzRN.
$$
Applying Korn's second inequality on the quotient space, we have
$$
\|[u]\|_{\WpRN}\leq C_p\|\Grads[u]\|_{\LpSN}~\forall~[u]\in\WpRN/\rbm_p.
$$
Then, for any nonzero $\varphi\in\WqzRN$, we have the following progression
\begin{align*}
\sup_{0 \neq [u]\in\WpRN/\rbm_p}\frac{\sprod[\LqSN][\LpSN]{\Grads\varphi}{\Grads[u]}}{\|[u]\|_{\WpRN}}
%&=
%\sup_{0 \neq [u]\in\WpRN/\rbm_p}\frac{|\sprod[\LqSN]%[\LpSN]{\Grads\varphi}{\Grads[u]}|}{\|[u]\|_{\WpRN}}\\
&\geq
\sup_{0 \neq [u]\in\WpRN/\rbm_p}\frac{|\sprod[\LqSN][\LpSN]{\Grads\varphi}{\Grads[u]}|}{C_p\|\Grads[u]\|_{\LpSN}}\\
&\geq
\sup_{0 \neq s\in\LpSN}\frac{|\sprod[\LqSN][\LpSN]{\Grads\varphi}{s}|}{C_p\|s\|_{\LpSN}}\\
&\geq
\frac{1}{C_p}\|\Grads\varphi\|_{\LqSN}\\
&\geq
\frac{1}{C_p C_q}\|\varphi\|_{\WqRN},
\end{align*}
which yields the $\inf-\sup$ condition 
$$
\inf_{0 \neq \varphi\in\WqzRN}\sup_{0 \neq [u]\in\WpRN/\rbm_p}\frac{\sprod[\LqSN][\LpSN]{\Grads\varphi}{\Grads[u]}}{\|\varphi\|_{\WqRN}\|[u]\|_{\WpRN}}\geq\frac{1}{C_p C_q} > 0.
$$
This result establishes weak coercivity and injectivity for the gradient operator; consequently, since the pairing is zero for all $\varphi\in\WqzRN$, the $\inf-\sup$ bound forces $\Grads[u] = 0$. Hence $s = \Grads[u] = 0$, proving $M\cap N = \{0\}$.

In addition, since $M$ and $N$ are closed, the Closed Range Theorem establishes duality through the annihilator, i.e.,  $M = N^\circ$ and $N^\circ = M$. It follows that $\{0\}= M\cap N = N^\circ\cap M^\circ = (M+N)^\circ$.

To show that $M+N$ is closed, let $\lbrace s_n \rbrace \in M+N$ be such that $s_n\to s \in \LpSN$. Writing $s_n = s_{M_n}+s_{N_n}$ (where $s_{M_n}\in M$ and $s_{N_n}\in N$) and noting that $\sprod[\LpSN][\LqSN]{s_{N_n}}{\Grad\varphi}=0$ for any $\varphi\in\WqzRN$. Observe that
\begin{align*}
|\sprod[\LpSN][\LqSN]{s_{M_n}}{\Grad\varphi}|
&=
|\sprod[\LpSN][\LqSN]{s_n}{\Grad\varphi}|\\
&\leq
\norm[\LpSN]{s_n}\norm[\LqSN]{\Grads\varphi}\\
&\leq
\norm[\LpSN]{s_n}\norm[\WqRN]{\varphi}
\end{align*}
implies $\norm[\LpSN]{s_{M_n}}\leq C\norm[\LpSN]{s_n}$. 
Since $\lbrace s_n \rbrace$ is convergent, it is bounded. Thus, $\lbrace s_{M_n}\rbrace$ and $\lbrace s_{N_n} = s_n-s_{M_n}\rbrace$ are bounded in the reflexive space $\LpSN$ ($1<p<\infty$). By closedness (and weak closedness) of $M$ and $N$, there exist limits $s_M\in M$ and $s_N \in N$ such that $s = s_M+s_N \in M+N$. Consequently, the sum $M+N$ is closed.

Due to the triviality of the annihilator $(M+N)^\circ$ and the closedness of $M+N$, we conclude that $\LpSN = M \oplus N$.
\end{proof}

\section{Main results}

\begin{framed}
\begin{theorem}
Let $f \in \rbm^\circ_p$. Then,
$
\Div s + f = 0
$
in $\Omega$ and $s\nu = 0$ on $\Gamma$ has a unique solution equivalent class $[s]\in\DpzSN/\Kernel(\Div|_{\DpzSN})$ and this equilibrated stress can be represented as $[s] = \Grads [v]$ for a unique equivalence class $[v]\in\WpRN/\rbm_p$. Moreover, the representative element $s_f\in[s]$ satisfying $\Pi s_f = 0$, is the unique element of $[s]$ with minimal $\LpSN$-norm.
\end{theorem}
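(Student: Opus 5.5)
We first establish existence via the weak (variational) Neumann problem for the symmetric gradient, and then read off uniqueness and the minimal-norm property from the Helmholtz--Weyl decomposition $\LpSN = M\oplus N$.

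\textbf{Step 1: existence through a variational problem.} Given $f\in\rbm^\circ_p$, we look for $[v]\in\WpRN/\rbm_p$ such that
$$
\sprod[\LqSN][\LpSN]{\Grads\varphi}{J_p(\Grads v)} = \sprod[\LqRN][\LpRN]{\varphi}{f}\quad\forall~\varphi\in\WqRN .
$$
Two routes are available for constructing $v$.
\begin{enumerate}
\item The cleanest route is to minimize the convex functional
$$
E([v]) = \frac1p\|\Grads v\|^p_{\LpSN} - \sprod{f}{v}
$$
over $\WpRN/\rbm_p$. Pairing $f$ with a class is well defined because $f$ annihilates rigid motions (up to the $p/q$ indexing of the annihilator). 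Korn's second inequality on the quotient, combined with Poincaré's inequality, gives coercivity, namely $\|[v]\|\le C\|\Grads v\|$. Reflexivity and weak lower semicontinuity then yield a minimizer, and strict convexity of $\|\cdot\|^p$ on the quotient (the map $[v]\mapsto\Grads v$ is injective) makes it unique.
\item The linear route the statement actually suggests is $s=\Grads v$ with $v$ solving a linear problem. The difficulty is that $\Grads v$ lies in $L^p$ while the test symmetric gradients lie in $L^q$. The cleanest fix is to treat $\Grads$ through the duality theorem $\Grads^*=-\Div|_{\DqzSN}$ together with the Closed Range Theorem. By Part I of the Helmholtz--Weyl proof, $\Grads$ has closed range on $\WpRN/\rbm_p$, where it is injective. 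Hence $\Div$ has closed range in the dual exponent, and this range equals the annihilator of $\Kernel(\Grads)=\rbm$. This is exactly the surjectivity of $\Div:\DpzSN\to\rbm^\circ_p$, with the exponents swapped appropriately by symmetry of the argument.
\end{enumerate}
Either route produces some $s\in\DpzSN$ with $\Div s+f=0$ and $s\nu=0$. The class $[s]$ modulo $N=\Kernel(\Div|_{\DpzSN})$ is unique because two solutions differ by an element of $N$.

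\textbf{Step 2: the representation $[s]=\Grads[v]$.} Decompose $s=\Grads v + n$ with $n\in N$ using $\LpSN=M\oplus N$. Then $\Grads v = s-n$ lies in $[s]$. Moreover $[v]$ is unique in $\WpRN/\rbm_p$: if $\Grads v_1-\Grads v_2\in M\cap N=\{0\}$, then Korn's second inequality gives $v_1-v_2\in\rbm_p$. For this step I must check that $\Grads v\in\DpzSN$, i.e.\ that $M\subset \DpzSN$ for the relevant elements. This holds because $\Grads v = s-n$ with both $s$ and $n$ in $\DpzSN$.

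\textbf{Step 3: minimal norm.} Set $s_f:=\Grads v$, so that $\Pi s_f=0$. Any other element of $[s]$ has the form $s_f+n$ with $n\in N$. I then need
$$
\|s_f\|_{\LpSN}\le\|s_f+n\|_{\LpSN}\quad\forall~n\in N,
$$
with equality only for $n=0$. This is the step I expect to be the main obstacle: in $L^p$ with $p\neq2$ the decomposition $M\oplus N$ is not metric-orthogonal, so minimal norm does not follow from the splitting alone. It must instead come from the variational characterization. The first-order (James/Birkhoff orthogonality) condition for minimizing $\|s_f+n\|$ over the closed subspace $N$ reads
$$
\sprod{J_p(s)}{n}=0\quad\forall~n\in N,
$$
where $J_p(s)=|s|^{p-2}s$ is the duality map. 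I would therefore take $s_f$ to be the minimizer from Step 1, route 1: the Euler--Lagrange equation gives $J_p(\Grads v)\in N^\circ$ on the $q$-side, which is precisely the minimal-norm condition. The unique minimizer exists because $\LpSN$ is uniformly convex and $[s]=s+N$ is a closed affine subspace. I would then identify this minimizer with the element having $\Pi$-component zero, which requires interpreting $\Pi$ compatibly with this nonlinear projection. If that identification fails for $p\neq2$, I would state the minimal-norm property for the uniformly convex metric projection and note that it coincides with $\Pi s=0$ when $p=2$.
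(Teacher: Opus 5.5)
Your Steps 1 (route 2) and 2 coincide with the paper's Parts I--III, and they are fine given the paper's Helmholtz--Weyl theorem. There, closed range of $\Grads$ via Korn gives $\Image(\Div|_{\DpzSN})=\rbm^\circ_p$, the Helmholtz--Weyl splitting yields $\Grads v\in[s]$, and $M\cap N=\{0\}$ gives uniqueness of $[v]$. Route 1, however, is mis-indexed. For $f\in\LpRN$ the pairing $\langle f,v\rangle$ need not make sense on $\WpRN$, and the stress it outputs, $J_p(\Grads v)$, lies in $L^q$. The correct version minimizes $\frac1q\|\Grads w\|^q_{\LqSN}-\langle f,w\rangle$ over $\WqRN/\rbm_q$. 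Its Euler--Lagrange equation says that $J_q(\Grads w)=|\Grads w|^{q-2}\Grads w\in\LpSN$ is an equilibrated stress in $\DpzSN$. This is also why the mechanism you propose in Step 3 fails: such an Euler--Lagrange equation says that the duality map of a gradient is equilibrated, not that $J_p(\Grads v)\in N^\circ$. Moreover, the $v$ of route 1 is not the $v$ of Step 2.

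The genuine gap is Step 3, and your suspicion is right: it cannot be closed for $p\neq 2$, and the paper does not close it either. Its Part IV asserts $\|s_f+\Pi s\|_{\LpSN}\geq\|s_f\|_{\LpSN}$ ``by strict convexity of the norm''. But strict convexity only gives uniqueness of the nearest point of $s+N$, not that the $M$-component is that point. By Green's identity, surjectivity of $\Td$, closedness of $\Grads(\WqRN)$ (Korn) and reflexivity, $N^\circ=\Grads(\WqRN)\subset\LqSN$. So the unique minimal-norm element $s^*$ of $[s]$ is characterized by $J_p(s^*)\in\Grads(\WqRN)$, i.e.\ $s^*=J_q(\Grads w)$ with $w$ the corrected route-1 minimizer. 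By contrast, $\Pi s_f=0$ means $s_f\in\Grads(\WpRN)$. The two coincide only if $J_p(s_f)$ is itself a symmetric gradient, which generically fails.

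Here is a counterexample in dimension two. Let $v=(\phi,0)$ with $\phi\in C^\infty_c(\Omega)$ equal to $x_1+x_2^2/2$ near an interior point (coordinates centred there), and $f:=-\Div\Grads v\in\rbm^\circ_p$. Then $s_f=\Grads v$, since compact support gives $s_f\nu=0$ and $s_f\in M$. Near that point $J:=J_p(s_f)$ has $J_{11}=g$, $J_{12}=x_2g/2$, $J_{22}=0$ with $g=(1+x_2^2/2)^{(p-2)/2}$. Hence the Saint-Venant expression $J_{11,22}+J_{22,11}-2J_{12,12}=g''(x_2)$ equals $(p-2)/2\neq0$ at the point. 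Take the Airy field $n$ of a bump $\chi$ supported there ($n_{11}=\chi_{,22}$, $n_{12}=-\chi_{,12}$, $n_{22}=\chi_{,11}$). Then $n\in N$ and $\frac{d}{dt}\frac1p\|s_f+tn\|^p_{\LpSN}\big|_{t=0}=\int_\Omega\chi\,g''(x_2)\,d\omega\neq0$, so $s_f$ is not of minimal norm in $[s]$.

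Hence the last sentence of the statement holds in general only for $p=2$, where the same Green identity gives $M\perp N$ in $L^2$. Your fallback is the correct theorem: minimal norm for the metric projection $s^*=J_q(\Grads w)$, with identification with $\Pi s=0$ only at $p=2$. State it as such rather than as a contingency.
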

\end{framed}

\begin{proof}
\textbf{Part I -- Existence.}
The range of $-\Div:\DpzSN\to\LpRN$ is the annihilator of the kernel of its adjoint. Due to the reflexivity of Lebesgue spaces for $1 < p < \infty$, we have $(\LpRN)^* \cong \LqRN$ and 
$$-\Div|_{\DpzSN}^*
=
\Grads|_{\WqRN}^{**}
=
\Grads|_{\WqRN}.$$
Since $\Kernel\left(\Grads|_{\WqRN}\right) = \rbm_p$, the Closed Range Theorem implies
$
\Image\left(-\Div|_{\DpzSN}\right)
=
\rbm^\circ_p\subset\LpRN.
$
Since $f\in\rbm^\circ_p$, there exists $s\in\DpzSN$ such that $\Div s + f = 0$.
\medskip

\textbf{Part II -- The solution as a gradient.}
Since $\LpSN = M\oplus N$, the solution from the previous part decomposes $s = s_M+s_N$, where $s_M\in M$ and $s_N\in N$. By the definition of annihilator $N = \Kernel\left(\Div|_{\DpzSN}\right)$, we observe
$$
-f = \Div s = \Div(s_M+s_N) = \Div(s_M).
$$
Since $s_M\in M$, there exists a unique class $[v]\in\WpRN/\rbm_p$ such that $s_M = \Grads[v].$

\medskip

\textbf{Part III -- Uniqueness.}
If there exists another solution $s'\in M\cap\DpzSN$, $s_M-s'$ satisfies $\Div(s_M-s') = 0$, which implies $s_M-s'\in N$. Since $s_M-s'$ is also in $M$ and $M\cap N = {0}$, $s_M-s'$ vanishes, ensuring solution's uniqueness.

\medskip

\textbf{Part IV -- Minimal norm.} Let $s_f\in M$ ($\Pi s_f = 0$) and $s\in [s]$ be such that $s-s_f = \Pi s$. Due to the strict convexity of the norm, we have
$$\|s\|_{\LpSN}=\|s_f+\Pi s\|_{\LpSN}\geq\|s_f\|_{\LpSN}.$$

\end{proof}

\begin{framed}
\begin{theorem}
Let $(\Omega,\mathcal{F},\mu)$ be a measure space, $1 \leq p < \infty$ and $s\in\LpSN$. Let $\mathfrak{S}\subset\SN$ be a finite set and let $\norm[F]{\cdot}$ denote the Frobenius norm. Then, there exists at least a measurable function $\tilde{s}^*:\Omega\to\mathfrak{S}$ such that
for $\mu$-almost every $\omega\in\Omega$,
$$\tilde{s}^*(\omega)\in\arg\left\lbrace\min_{\mathfrak{s} \in \mathfrak{S}} \norm[F]{s(\omega) - \mathfrak{s}}\right\rbrace.$$
Moreover, if the tie-set, i.e., 
$$\Omega_T := \bigcup_{\substack{\mathfrak{s}_i, \mathfrak{s}_j\in\mathfrak{S} \\ \mathfrak{s}_i\neq \mathfrak{s}_j}}\lbrace \omega \in \Omega : \norm[F]{s(\omega)-\mathfrak{s}_i}=\norm[F]{s(\omega)-\mathfrak{s}_j}\rbrace,$$
satisfies $\mu(\Omega_T) = 0$, then $\tilde{s}^*$ is unique $\mu$-almost everywhere.
\end{theorem}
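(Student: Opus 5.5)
The plan is to build $\tilde{s}^*$ explicitly from the finitely many measurable "distance functions" and an ordering of $\mathfrak{S}$. Enumerate $\mathfrak{S} = \lbrace \mathfrak{s}_1,\dots,\mathfrak{s}_K\rbrace$ with distinct elements. For each $k$, set
$$d_k(\omega) := \norm[F]{s(\omega)-\mathfrak{s}_k}.$$
Each $d_k$ is $\mathcal{F}$-measurable, since it is the composition of the measurable map $s$ (fix any representative) with the continuous map $\mathbb{S}^N\ni x\mapsto \norm[F]{x-\mathfrak{s}_k}$. Then $d(\omega):=\min_k d_k(\omega)$ is measurable, being a finite minimum of measurable functions.

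\textbf{Existence.} Use a lexicographic tie-break. Define
$$E_k := \lbrace \omega : d_k(\omega) = d(\omega)\rbrace \setminus \bigcup_{j<k} E_j ,$$
that is, $E_k$ is the set where index $k$ is the smallest minimizing index. The $E_k$ are measurable, pairwise disjoint, and cover $\Omega$, because the minimum over a finite set is always attained. Then set
$$\tilde{s}^* := \sum_{k=1}^K \mathfrak{s}_k \mathbf{1}_{E_k}.$$
This is a measurable simple function with values in $\mathfrak{S}$, and by construction $\tilde{s}^*(\omega)$ attains the minimum at every point, not merely almost everywhere. Since it is bounded and takes finitely many values, it lies in $L^p$ when $\mu(\Omega)<\infty$. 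For the general measure space, only measurability is claimed, so nothing more is needed. Finally, changing the representative of $s$ alters $\tilde{s}^*$ only on a $\mu$-null set, so the statement is consistent with working in $\LpSN$.

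\textbf{Uniqueness.} Let $\tilde{s}_1,\tilde{s}_2$ be two measurable selections, each minimizing for a.e.\ $\omega$. Take a point $\omega$ outside the union of the two exceptional null sets with $\tilde{s}_1(\omega)\neq\tilde{s}_2(\omega)$. Both values are minimizers, so
$$\norm[F]{s(\omega)-\tilde{s}_1(\omega)}=\norm[F]{s(\omega)-\tilde{s}_2(\omega)},$$
with two distinct elements of $\mathfrak{S}$, and hence $\omega\in\Omega_T$. Therefore $\lbrace\tilde{s}_1\neq\tilde{s}_2\rbrace\subset\Omega_T\cup(\text{null set})$, which has measure zero when $\mu(\Omega_T)=0$.

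\textbf{Measurability of $\Omega_T$.} This must be checked for the hypothesis $\mu(\Omega_T)=0$ to make sense. It holds because $\Omega_T$ is a finite union of sets $\lbrace d_i = d_j\rbrace$ with $i\neq j$, each measurable.

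\textbf{Main obstacle.} The only genuinely delicate point is the measurable selection; a naive pointwise choice of minimizer need not be measurable. The finiteness of $\mathfrak{S}$ is what lets the explicit index ordering replace any appeal to a general measurable selection theorem such as Kuratowski–Ryll-Nardzewski.
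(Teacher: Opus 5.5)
Your proof is correct and follows essentially the same route as the paper. Both use a smallest-index tie-break over the finite set $\mathfrak{S}$ to produce a measurable simple selector, and both rely on measurability of the pairwise tie sets $\{d_i=d_j\}$ for the uniqueness claim. Your version is somewhat more careful in three respects: the sets $E_k$ make explicit the measurability that the paper only asserts; your uniqueness argument uses only the correct inclusion of the non-uniqueness set in $\Omega_T$, whereas the paper claims equality, which fails when two non-minimal data points tie; and you rightly note that $L^\infty\subset L^p$ requires $\mu(\Omega)<\infty$.
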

\end{framed}

\begin{proof}
\textbf{Part I -- Pointwise minimization.}
Since $\mathfrak{S}$ is finite, the minimization problem at each point $\omega\in\Omega$ is a search over a discrete set. For a fixed $\omega$, we define the function $g_\omega(\mathfrak{s}) = \norm[F]{s(\omega)-\mathfrak{s}}$.
On a finite set, the ``minimization'' simply reduces to
$\min\left\lbrace g_\omega(\mathfrak{s}_1),...,g_\omega(\mathfrak{s}_{|\mathfrak{S}|})\right\rbrace.$
The minimum of finitely many real numbers always exists and is attained (not necessarily unique).\\

\textbf{Part II -- Existence.}
Recall first that $\SN$ is a separable finite-dimensional normed space (hence Polish) and with its well-defined Borel $\sigma$-algebra . Since a minimizer exists for each $\omega\in\Omega$, we can define a mapping $\tilde{s}^*:\Omega\to\mathfrak{S}$. For each $\omega$, choose $\mathfrak{s}_i$ with the smallest index among all elements of $\arg\left\lbrace\min_{\mathfrak{s} \in \mathfrak{S}} \norm[F]{s(\omega) - \mathfrak{s}}\right\rbrace$. Because $s$ is a measurable function (as it belongs to $\LpSN$) and $\mathfrak{S}$ is finite, the selection of the nearest point in $\mathfrak{S}$ results in a measurable simple function. Thus, the measurable selection $\tilde{s}^*$ exists and belongs to $L^\infty(\Omega;\SN)$, and by extension to $\LpSN$.\\

\textbf{Part III -- Uniqueness.}
For distinct $\mathfrak{s}_i,\mathfrak{s}_j \in\mathfrak{S}$, define the measurable function $f_{ij}:= \norm[F]{s(\omega)-\mathfrak{s}_i}-\norm[F]{s(\omega)-\mathfrak{s}_j}$. Then the set of points where $\mathfrak{s}_i$ and $\mathfrak{s}_j$ tie is $\Omega_{T_{ij}} = f_{ij}^{-1}\lbrace 0\rbrace$. Because $\lbrace 0\rbrace$ is a Borel set on $\R$ and $f_{ij}$ is measurable, each $\Omega_{T_{ij}}\in\mathcal{F}$ is measurable. Thus, the full tie-set is measurable. The minimizer fails to be unique exactly on $\Omega_T$. If $\mu(\Omega_T) = 0$, the set of points where the $\arg\left\lbrace\min_{\mathfrak{s} \in \mathfrak{S}} \norm[F]{s(\omega) - \mathfrak{s}}\right\rbrace$ is not a singleton is negligible. Therefore, the selection $\tilde{s}^*$ is unique $\mu$-almost everywhere.\\

\textbf{Part IV -- Voronoi interpretation and measurable partition.} Because $\mathfrak{S}\subset\SN$ is finite, it induces a Voronoi tessellation of $\SN$  with respect to $\norm[F]{\cdot}$. For each $\mathfrak{s}_i\in\mathfrak{S}$, define its Frobenius-Voronoi cell
$$
V_i:=\left\lbrace\sigma\in\SN:\norm[F]{\sigma-\mathfrak{s}_i}\leq\norm[F]{\sigma-\mathfrak{s}_j}~\forall~\mathfrak{s}_j\in\mathfrak{S}\right\rbrace
=
\bigcap_{\mathfrak{s}_j\in\mathfrak{S}}
\left\lbrace\sigma\in\SN:\norm[F]{\sigma-\mathfrak{s}_i}-\norm[F]{\sigma-\mathfrak{s}_j}\leq 0\right\rbrace.
$$
Each $V_i$ is a closed Borel set in $\SN$, since it is a finite intersection of preimages of $(-\infty,0]$ under continuous maps $\sigma\mapsto\norm[F]{\sigma-\mathfrak{s}_i}-\norm[F]{\sigma-\mathfrak{s}_j}$. Consequently, the preimage $\Omega_i:=s^{-1}(V_i)=\lbrace\omega\in\Omega:s(\omega)\in V_i\rbrace$ is measurable for every $i$. On $\Omega\setminus\Omega_T$, the cells are disjoint, so $\{\Omega_i\}_{i=1}^{|\mathfrak{S}|}$ 
form a measurable partition (up to null set) of $\Omega$: for $\mu$-almost every $\omega$, the selector equals the unique label of the Frobenius-Voronoi cell containing $s(\omega)$, i.e.,
$$\tilde{s}^*(\omega) = \mathfrak{s}_i
\quad\Longleftrightarrow\quad
\omega\in \Omega_i\mathrm{~and~}\omega\notin \Omega_T.$$
\end{proof}

%\section{Conclusions}

\section{Acknowledgments}

CGG gratefully acknowledges the financial support from the European Research Council through the ERC Consolidator Grant ``DATA-DRIVEN OFFSHORE'' (Project ID 101083157). CGG also gratefully acknowledges Prof. Leandro Cagliero for insightful discussions on the measure‑theoretic aspects addressed in this work. FAR and KK would like to acknowledge the financial support from the Vista Center for Modeling of Coupled Subsurface Dynamics (VISTA CSD). KK would like to acknowledge the Center for Sustainable Subsurface Resources (CSSR). FAR wants to thank the support from the project MUPSI, CETP-2023-00298.

\bibliographystyle{plain}
\bibliography{references} 

\end{document}